\let\thm@indent\indent}{\let\thm@indent\noindent}%
  {}{}
\renewcommand\th@plain{\slshape}
\newtheoremstyle{plain}
  {-\topsep}
  {}
  {\slshape}
  {}
  {\sffamily\bfseries}
  {.}
  {.5em}
  {}
\theoremstyle{plain}
\newtheorem{theorem}{Theorem}
\newtheorem{pretheorem}{Theorem}
\newtheorem{corollary}[theorem]{Corollary}
\newtheorem{lemma}[theorem]{Lemma}
\newtheorem*{claim*}{Claim}
\newtheoremstyle{definition}
  {-\topsep}
  {}
  {\normalfont}
  {}
  {\sffamily\bfseries}
  {.}
  {.5em}
  {}
\theoremstyle{definition}
\newtheorem{remark}[theorem]{Remark}
\crefname{section}{Section}{Sections}
\crefname{theorem}{Theorem}{Theorems}
\crefname{pretheorem}{Theorem}{Theorems}
\crefname{corollary}{Corollary}{Corollaries}
\crefname{lemma}{Lemma}{Lemmas}
\crefname{proposition}{Proposition}{Propositions}
\crefname{claim}{Claim}{Claims}
\crefname{definition}{Definition}{Definitions}
\crefname{notation}{Notation}{Notations}
\crefname{problem}{Problem}{Problems}
\crefname{question}{Question}{Questions}
\crefname{note}{Note}{Notes}
\crefname{remark}{Remark}{Remarks}
\crefname{example}{Example}{Examples}
\crefname{enumi}{}{}
\crefname{enumii}{}{}
\crefname{enumiii}{}{}
\newcommand{\restore@Environment}[1]{%
  \AtBeginDocument{%
    \csletcs{#1*}{#1}%
    \csletcs{end#1*}{end#1}%
  }%
}
\forcsvlist\restore@Environment{alignat,equation,gather,multline,flalign,align}
\setlist{leftmargin=20pt}
\setlist[enumerate]{label=\textup{(\roman*)}}
\newif\ifendnotes
\newif\ifshowkeys
\let\tmp\phi
\let\phi\varphi
\let\varphi\tmp
\let\tmp\epsilon
\let\epsilon\varepsilon
\let\varepsilon\tmp
\renewcommand{\subset}{\subseteq}
\renewcommand{\mod}[1]{(\mathrm{mod}\ #1)}
\renewcommand{\and}{\quad\text{and}\quad}
\NewDocumentCommand{\xsideset}{mmme{_^}}{%
\mathop{%
\settowidth{\dimen0}{$\m@th\displaystyle#3$}%
\dimen0=.5\dimen0
\settowidth{\dimen2}{$%
\m@th\displaystyle#3%
\IfValueT{#4}{_{#4}}%
\IfValueT{#5}{^{#5}}%
$}%
\dimen2=.5\dimen2
\advance\dimen2 -\dimen0
\sbox6{\scriptspace\z@$\displaystyle{\vphantom{#3}}#1$}
\sbox8{\scriptspace\z@$\displaystyle{\vphantom{#3}}#2$}
\ifdim\wd6>\dimen2 \kern\dimexpr\wd6-\dimen2\relax\fi
{%
\mathop{\llap{\copy6}{\displaystyle#3}\rlap{\copy8}}\limits%
\IfValueT{#4}{_{#4}}%
\IfValueT{#5}{^{#5}}%
}%
\ifdim\wd8>\dimen2 \kern\dimexpr\wd8-\dimen2\relax\fi
}%
}
\begin{document}

\title[Relatively prime pairs in the Piatetski-Shapiro sequences]
{Relatively prime pairs in the Piatetski-Shapiro sequences}
\author[Y. Suzuki]{Yuta Suzuki}
\keywords{Piatetski-Shapiro sequences, Exponential sums}
\subjclass{Primary: 11N25, Secondary: 11L03, 11L07.}
\maketitle

\begin{abstract}
In this paper, we prove an asymptotic formula
for the number of relatively prime pairs in the Piatetski-Shapiro sequence of arbitrarily large order.
This improves the result of Pimsert, Srichan and Tangsupphathawat (2023),
the order of which was restricted to be $<\frac{3}{2}$.
The key ingredients of the proof are a simple averaging trick
and an extension of Deshouillers' result
on the distribution of the Piatetski-Shapiro sequence in arithmetic progressions.
\end{abstract}

\section{Introduction}
\label{sec:intro}
For a positive real number $c>0$,
the sequence $([n^{c}])_{n=1}^{\infty}$ is called
the Piatetski-Shapiro sequence of order $c$,
which is named after the work of Piatetski-Shapiro~\cite{PS:PS_prime}
on the primes in this sequence.
Note that this sequence is also considered by Segal~\cite{Segal} before Piatetski-Shapiro~\cite{PS:PS_prime}.
In this paper, we consider the number of relatively prime pairs
in a Piatetski-Shapiro sequence, i.e.\ 
\begin{equation}
\label{main_sum}
\sum_{\substack{
m,n\le x\\
([m^{c}],[n^{c}])=1
}}
1.
\end{equation}
By recalling the well-known classical result
\begin{equation}
\label{classical}
\sum_{\substack{
m,n\le x\\
(m,n)=1
}}
1
=
\frac{1}{\zeta(2)}x^{2}
+
O(x\log x)
\end{equation}
(the error term here can be improved by
using Vinogradov's mean value theorem in the style of Walfisz),
it is natural to expect some asymptotic formula
for the sum \cref{main_sum}.
Such a result is obtained by
Pimsert, Srichan and Tangsupphathawat~\cite{PST}:
\begin{pretheorem}[{Pimsert, Srichan and Tangsupphathawat~\cite{PST}}]
\label{prethm:PST}
For $1\le c<\frac{3}{2}$ and $x\ge1$, we have
\[
\sum_{\substack{
m,n\le x\\
([m^{c}],[n^{c}])=1
}}
1
=
\frac{1}{\zeta(2)}x^{2}
+
R_{c}(x),
\]
where
\[
R_{c}(x)
\ll
\left\{
\begin{array}{>{\displaystyle}cl}
x^{\frac{c+4}{3}}&\text{for $1\le c\le\frac{5}{4}$},\\[2mm]
x^{\frac{2c+1}{2}}&\text{for $\frac{5}{4}\le c<\frac{3}{2}$}
\end{array}
\right.
\]
and the implicit constant depends only on $c$.
\end{pretheorem}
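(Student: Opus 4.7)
The plan is to apply M\"obius inversion to reduce the problem to estimating the counting function
\[
A_d(x):=\#\{m\le x:d\mid[m^c]\},
\]
since
\[
\sum_{\substack{m,n\le x\\([m^c],[n^c])=1}}1=\sum_{d}\mu(d)A_d(x)^2,
\]
the sum being effectively restricted to $d\le x^c$ because $[m^c]\le x^c$. To handle $A_d(x)$ one uses that $d\mid[m^c]$ if and only if $\{m^c/d\}\in[0,1/d)$, and applies a truncated Fourier expansion of the indicator of $[0,1/d)$, for instance via the Erd\H{o}s--Tur\'an--Koksma inequality. For any $H\ge1$ this yields
\[
A_d(x)=\frac{x}{d}+O\Biggl(\frac{x}{H}+\sum_{h=1}^{H}\frac{1}{h}\Biggl|\sum_{m\le x}e(hm^c/d)\Biggr|\Biggr).
\]

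The individual exponential sums should then be controlled by van der Corput's second derivative test applied dyadically in $m$. Since the second derivative of $hm^c/d$ is of order $(h/d)m^{c-2}$, the test gives
\[
\Biggl|\sum_{m\le x}e(hm^c/d)\Biggr|\ll(h/d)^{1/2}x^{c/2}+(h/d)^{-1/2}x^{(2-c)/2}.
\]
The restriction $c<3/2$ is natural for this approach: it is the range in which the elementary second derivative test alone suffices to make $A_d(x)-x/d$ smaller than the main term $x/d$ for moderately small $d$, so that no deeper exponent-pair or higher-derivative methods are required.

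One next splits the outer sum at a threshold $D$. Writing $A_d(x)=x/d+E_d(x)$ and expanding,
\[
\sum_{d\le D}\mu(d)A_d(x)^2=x^2\sum_{d\le D}\frac{\mu(d)}{d^2}+2x\sum_{d\le D}\frac{\mu(d)E_d(x)}{d}+\sum_{d\le D}\mu(d)E_d(x)^2,
\]
and the first piece gives the expected main term $x^2/\zeta(2)+O(x^2/D)$. The cross and diagonal pieces, together with the tail $\sum_{D<d\le x^c}A_d(x)^2$, are all controlled via the exponential-sum estimate above, with trivial bounds supplementing the second-derivative bound in the large-$d$ range where the latter loses strength.

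The main obstacle will be the joint optimization of the Fourier truncation $H$ and the splitting threshold $D$, so as to simultaneously balance the main-term gap $x^2/D$ against the cross term, the diagonal term, and the tail. The two branches in the theorem's error bound reflect distinct dominant contributions in this optimization: the bound $x^{(c+4)/3}$ for $1\le c\le5/4$ and $x^{(2c+1)/2}$ for $5/4\le c<3/2$ cross over precisely at $c=5/4$, where both are equal to $x^{7/4}$.
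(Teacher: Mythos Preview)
Theorem~A is a result \emph{cited} from \cite{PST}; the present paper does not give its own proof of this statement but rather improves upon it in Theorem~\ref{thm:main_thm}. Your sketch is a sound reconstruction of the natural approach and, carried through, does recover the stated bounds: with $E_d(x):=A_d(x)-x/d\ll\min(x^{(c+1)/3}d^{-1/3},\,x^c d^{-1})$ (the $k=2$ case of Theorem~\ref{thm:PS_AP}, i.e.\ Deshouillers' estimate), the cross term $2x\sum_d |E_d|/d$ contributes $O(x^{(c+4)/3})$ while the diagonal term $\sum_d E_d^2$ contributes $O(x^{(2c+1)/2})$, crossing over at $c=5/4$. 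One small correction: the barrier at $c=3/2$ is not really about $E_d$ failing to beat $x/d$ for small $d$, but about the diagonal contribution $x^{(2c+1)/2}$ reaching the size $x^2$ of the main term.

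For comparison with the paper's proof of its own Theorem~\ref{thm:main_thm}: the point of departure from your outline is precisely the handling of the tail $\sum_{D<d\le x^{c}}\mu(d)A_d(x)^2$. You (and presumably \cite{PST}) bound both factors $A_d(x)$ by the exponential-sum or trivial estimate; as the paper explains in the introduction, this produces a total error of order $x^{c+1-\frac{k-1}{2^{k}-1}}$, which exceeds $x^2$ once $c$ is large. The paper instead bounds only \emph{one} factor $A_d(x)$ and swaps the surviving sum, using
\[
\sum_{D<d\le x^{c}} A_d(x)\le\sum_{n\le x}\tau([n^{c}])\ll x^{1+\epsilon}.
\]
This simple averaging trick is what removes the restriction $c<\tfrac{3}{2}$ and is the main novelty of the paper over \cite{PST}; it is unnecessary in the range you are treating but indispensable beyond it.
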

We usually have a restriction on the size of the order $c$
in results on the Piatetski-Shapiro sequences.
However, for the sum \cref{main_sum},
we have
\[
c\in\mathbb{N}
\implies
\sum_{\substack{
m,n\le x\\
([m^{c}],[n^{c}])=1
}}
1
=
\sum_{\substack{
m,n\le x\\
(m,n)=1
}}
1
\]
and so, in contrast to the usual problem with the Piatetski-Shapiro sequences,
there is no trivial obstruction at $c=2$
to obtain an asymptotic formula for \cref{main_sum}.
Indeed, in this paper,
we extend the admissible range $c\in[1,\frac{3}{2})$ of \cref{prethm:PST}
to obtain the asymptotic formula for any order $c\ge1$:
\begin{theorem}
\label{thm:main_thm}
For $k\in\mathbb{Z}_{\ge2}$, $1\le c<k$ and $x\ge1$, we have
\begin{equation}
\label{thm:main_thm:general_k}
\sum_{\substack{
m,n\le x\\
([m^{c}],[n^{c}])=1
}}
1
=
\frac{1}{\zeta(2)}x^{2}
+
O(x^{2-\frac{k-c}{2^{k}-1}}),
\end{equation}
where the implicit constant depends only on $c$ and $k$.
Consequently, for $c\ge1$ and $x\ge1$,
\begin{equation}
\label{thm:main_thm:special_k}
\sum_{\substack{
m,n\le x\\
([m^{c}],[n^{c}])=1
}}
1
=
\frac{1}{\zeta(2)}x^{2}
+
O(x^{2-2^{-(\lceil c\rceil+1)}}),
\end{equation}
where the implicit constant depends only on $c$.
\end{theorem}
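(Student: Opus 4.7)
The plan is to reduce the relative primality count to a distribution-in-progressions problem by Möbius inversion, to estimate the resulting distribution function by van der Corput's $k$-th derivative test, and then to reassemble. Writing $M_{d}(x):=\#\{m\le x:d\mid[m^{c}]\}$, the identity $\mathbf{1}_{(a,b)=1}=\sum_{d\mid\gcd(a,b)}\mu(d)$ gives
\[
\sum_{\substack{m,n\le x\\ ([m^{c}],[n^{c}])=1}}1
=
\sum_{d}\mu(d)\,M_{d}(x)^{2},
\]
and since $d\mid[m^{c}]$ is equivalent to $\{m^{c}/d\}\in[0,1/d)$, computing $M_{d}(x)$ is essentially the distribution problem of $([m^{c}])_{m\le x}$ in the arithmetic progression $0\bmod d$---precisely Deshouillers' setting.

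To evaluate $M_{d}(x)$, I would approximate $\mathbf{1}_{[0,1/d)\bmod 1}$ by a Vaaler trigonometric polynomial of degree $H$, obtaining
\[
M_{d}(x)
=
\frac{x}{d}
+
O\!\left(\frac{x}{Hd}\right)
+
O\!\left(\sum_{1\le h\le H}\frac{1}{h}\,|T(h,d)|\right),
\quad
T(h,d):=\sum_{m\le x}e(hm^{c}/d).
\]
Because the $j$-th derivative of $hm^{c}/d$ is $\asymp hm^{c-j}/d$, van der Corput's $k$-th derivative test yields a nontrivial bound on $T(h,d)$ whose saving depends on $c$ and $k$; choosing $k>c$ gives a usable estimate for any $c\ge1$, and this flexibility in the order of derivative is exactly what extends Deshouillers' original result past $c=\tfrac{3}{2}$.

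Writing $M_{d}(x)=x/d+E_{d}$ and noting that $M_{d}(x)=0$ for $d>x^{c}$, I would truncate at some $d\le D$ and expand
\[
\sum_{d}\mu(d)M_{d}(x)^{2}
=
\frac{x^{2}}{\zeta(2)}
+
O\!\left(\frac{x^{2}}{D}\right)
+
2x\sum_{d\le D}\frac{\mu(d)E_{d}}{d}
+
\sum_{d\le D}\mu(d)E_{d}^{2}
+
\sum_{D<d\le x^{c}}\mu(d)M_{d}^{2}.
\]
The cross term is treated by the pointwise van der Corput estimate, while the second moment $\sum_{d\le D}E_{d}^{2}$ is where the \emph{averaging trick} enters: one opens the square, exchanges the order of summation so that the innermost sum is over $d$, and exploits the cancellation of $\sum_{d}\mu(d)e(\cdot/d)$ that is invisible to a pointwise bound. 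After optimizing $H$ and $D$ in terms of $c$ and $k$, this should deliver the exponent $2-(k-c)/(2^{k}-1)$ in \cref{thm:main_thm:general_k}; the consequence \cref{thm:main_thm:special_k} then follows by taking $k=\lceil c\rceil+1$. The main obstacle will be calibrating the pointwise and averaged estimates so that their ranges of efficacy cover $d\le D$ without a gap: for $d$ near the cutoff neither bound alone suffices, and the precise matching between van der Corput's exponent and the gain from averaging over $d$ is what will control the final error.
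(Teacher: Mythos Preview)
Your skeleton (M\"obius inversion, the Erd\H{o}s--Tur\'an/Vaaler reduction, van der Corput's $k$-th derivative test for $M_d(x)=x/d+E_d$) is correct and matches the paper; in particular your observation that taking $k>c$ removes the restriction $c<\tfrac{3}{2}$ is exactly the content of \cref{thm:PS_AP}. The gap is in where you locate the ``averaging trick'' and what it consists of.

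First, the second moment $\sum_{d\le D}\mu(d)E_d^{2}$ needs no trick: the pointwise bound $E_d\ll x^{1-\frac{k-c}{2^{k}-1}}d^{-\frac{1}{2^{k}-1}}$ already gives an acceptable contribution once $D$ is chosen (this is the paper's $S_1$). Your proposed mechanism---cancellation in $\sum_{d}\mu(d)e(\alpha/d)$ after opening the square---is not a known phenomenon and would not help: there is no oscillation in $d$ to exploit since the phase $hm^{c}/d$ varies smoothly and monotonically in~$d$. Second, and more seriously, you list the tail $\sum_{D<d\le x^{c}}\mu(d)M_d(x)^{2}$ but say nothing about how to bound it. This is the term that actually obstructs the argument: applying the pointwise bound to \emph{both} factors of $M_d$ yields $\sum_{D<d\le x^{c}}M_d^{2}\ll x^{2+c-\frac{2k}{2^{k}-1}}$, which exceeds the main term once $c$ is large, regardless of $k$ or $D$. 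The paper's actual trick lives here, not in the small-$d$ second moment, and it is different from what you describe: one applies the van der Corput bound to only \emph{one} factor $M_d$, keeps the other factor as the counting sum $\sum_{n\le x}\mathbf{1}_{d\mid[n^{c}]}$, swaps the $d$- and $n$-sums, and invokes the divisor bound:
\[
\sum_{D<d\le x^{c}}M_d(x)^{2}
\ll
x^{1-\frac{k-c}{2^{k}-1}}D^{-\frac{1}{2^{k}-1}}
\sum_{n\le x}\tau([n^{c}])
\ll
x^{2-\frac{k-c}{2^{k}-1}+\epsilon}D^{-\frac{1}{2^{k}-1}}.
\]
With $D=x^{(k-c)/(2^{k}-2)}$ this matches the target exponent. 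No cancellation from $\mu$ is used at all; the saving comes purely from replacing one factor by a bound that is uniform in $d$ and then collapsing the $d$-sum into a divisor count.
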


Note that \cref{thm:main_thm}
even improves the error term estimate of \cref{prethm:PST} when $\frac{5}{4}<c<\frac{3}{2}$ since
\begin{gather}
\biggl(2-\frac{k-c}{2^{k}-1}\biggr)\bigg\vert_{k=2}
=
\frac{c+4}{3},\quad
\biggl(2-\frac{k-c}{2^{k}-1}\biggr)\bigg\vert_{k=3}
=
\frac{c+11}{7},\\[2mm]
\frac{c+4}{3}<\frac{2c+1}{2}
\iff
\frac{5}{4}<c
\and
\frac{c+11}{7}<\frac{c+4}{3}
\iff
\frac{5}{4}<c.
\end{gather}
For more detailed optimization of the parameter $k$,
see the last part of the proof of \cref{thm:main_thm}.

To prove \cref{thm:main_thm},
we need first a result on the distribution of the Piateteski-Shapiro sequences in arithmetic progressions.
For $x\ge1$ and $a,q\in\mathbb{Z}$ with $q\ge1$, let
\[
N_{c}(x;a,q)
\coloneqq
\sum_{\substack{
n\le x\\
[n^{c}]\equiv a\ \mod{q}
}}
1.
\]
For this problem, the distribution of the Piateteski-Shapiro sequences in arithmetic progressions,
there had been a result of Deshouillers~\cite{Deshouillers:ComptesRendus}
(the proof with the first and the second error term was given in \cite{Deshouillers:CubeFree}
with a sketch of the proof of the third error term estimate):
\begin{pretheorem}[{Deshouillers~\cite{Deshouillers:ComptesRendus,Deshouillers:CubeFree}}]
\label{prethm:Deshouillers_PS_AP}
For $c\in(1,2)$, $x\ge1$ and $a,q\in\mathbb{Z}$ with $1\le q\le x^{c}$, we have
\[
N_{c}(x;a,q)
=
\frac{x}{q}
+
O\Bigl(
\min\bigl(
x^{c}q^{-1},
x^{\frac{c+1}{3}}q^{-\frac{1}{3}},
x^{\frac{c+4}{7}}q^{-\frac{1}{7}}
\bigr)
\Bigr),
\]
where the implicit constant depends only on $c$.
\end{pretheorem}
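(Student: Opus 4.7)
The plan is to detect the congruence $[n^{c}]\equiv a\ \mod{q}$ by Fourier analysis, reducing the count to the exponential sums
\[
S_{h}(x)
\coloneqq
\sum_{n\le x}e\!\left(\frac{hn^{c}}{q}\right),
\qquad
e(t)=e^{2\pi it},
\]
and then to apply the van der Corput $k$-th derivative test with $k=2,3$ to obtain the second and third bounds, while the first bound is trivial.

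\textbf{Trivial bound.} Since $[n^{c}]\in[0,x^{c}]$, at most $\lceil x^{c}/q\rceil$ integers in this range lie in the residue class $a\pmod q$. The mean value theorem gives $n_{2}^{c}-n_{1}^{c}\ge c\,n_{1}^{c-1}(n_{2}-n_{1})$, so $[n_{1}^{c}]=[n_{2}^{c}]$ for positive integers $n_{1}<n_{2}$ forces $n_{1}=O(1)$. Hence $N_{c}(x;a,q)\ll x^{c}/q+1$, which, together with the main term $x/q$, yields the first error $x^{c}q^{-1}$ when $q\le x^{c}$.

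\textbf{Fourier reduction.} The condition $[n^{c}]\equiv a\ \mod q$ is equivalent to $\{(n^{c}-a)/q\}\in[0,1/q)$. Applying the Erd\H{o}s--Tur\'an--Koksma inequality (equivalently, the Beurling--Selberg / Vaaler majorant) with a parameter $H\ge1$ gives
\[
N_{c}(x;a,q)-\frac{x}{q}
\ll
\frac{x}{H}
+\sum_{1\le h\le H}\frac{1}{h}|S_{h}(x)|.
\]

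\textbf{Van der Corput estimates.} For $f_{h}(t)=ht^{c}/q$ and dyadic $[N,2N]\subset[1,x]$ we have $|f_{h}^{(k)}(t)|\asymp hN^{c-k}/q$, which is nonzero for every $k\ge2$ precisely because $c\in(1,2)$. The $k$-th derivative test
\[
\Bigl|\sum_{N\le n<2N}e(f_{h}(n))\Bigr|
\ll
N\lambda_{k}^{\frac{1}{2^{k}-2}}
+N^{1-2^{2-k}}\lambda_{k}^{-\frac{1}{2^{k}-2}},
\qquad
\lambda_{k}\asymp \frac{hN^{c-k}}{q},
\]
applied for $k=2$ and summed dyadically and over $1\le h\le H$ with weight $1/h$, then balanced in $H$, produces the bound $x^{(c+1)/3}q^{-1/3}$; repeating with $k=3$ produces $x^{(c+4)/7}q^{-1/7}$. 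In both cases the leading term $N\lambda_{k}^{1/(2^{k}-2)}$ is dyadically maximized at $N\asymp x$, and the optimization $H\asymp x^{(2-c)/3}q^{1/3}$ (respectively $H\asymp x^{(3-c)/7}q^{1/7}$) equates $x/H$ with the leading van der Corput contribution.

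\textbf{Main obstacle.} The chief nuisance lies in ensuring that, after the dyadic and $h$-sums, it is indeed the term $N\lambda_{k}^{1/(2^{k}-2)}$ that dominates over $N^{1-2^{2-k}}\lambda_{k}^{-1/(2^{k}-2)}$ for all relevant $h$ and $N$; the ``bad'' ranges where the second term dominates --- typically $N$ small (so $n^{c}\ll q$) or $h$ very small --- must be split off and handled trivially, using $N_{c}(x;a,q)\ll q^{1/c}+\cdots$ to absorb such contributions. Logarithmic factors from the Erd\H{o}s--Tur\'an sum over $h$ are absorbed by a mild adjustment of $H$. The restriction $c\in(1,2)$ is precisely what guarantees $f_{h}^{(k)}$ is of one sign and of the correct size for each $k$.
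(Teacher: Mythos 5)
Your proposal follows essentially the same route as the paper's proof (the paper proves its \cref{thm:PS_AP} by exactly this chain --- Erd\H{o}s--Tur\'an reduction to $\sum e(hn^{c}/q)$, dyadic decomposition, van der Corput's $k$-th derivative test, then balance in $H$ --- and obtains \cref{prethm:Deshouillers_PS_AP} as the cases $k=1,2,3$). The only cosmetic differences are that you use the classical weight $1/h$ in Erd\H{o}s--Tur\'an rather than Montgomery's weight $1/H+\min(1/q,1/h)$, and the Titchmarsh form $N\lambda_{k}^{1/(2^{k}-2)}+N^{1-2^{2-k}}\lambda_{k}^{-1/(2^{k}-2)}$ of the derivative test rather than the Graham--Kolesnik form $F^{1/(2^{k}-2)}N^{1-k/(2^{k}-2)}+F^{-1}N$; both choices lead to the same balanced exponents $x^{(c+1)/3}q^{-1/3}$ and $x^{(c+4)/7}q^{-1/7}$, and your observation that the subsidiary term must be controlled on ``bad'' ranges corresponds to the paper's explicit handling of the $F^{-1}N$ contribution (and of the preliminary reduction to $q\le x^{c-1/2}$).
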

However, to obtain a result with an arbitrary order $c>1$,
\cref{prethm:Deshouillers_PS_AP} is apparently insufficient by the condition $c\in(1,2)$.
We thus extend \cref{prethm:Deshouillers_PS_AP} as follows:
\begin{theorem}
\label{thm:PS_AP}
For $c\in(1,\infty)\setminus\mathbb{Z}$, $x\ge1$ and $a,q,k\in\mathbb{Z}$
with $q\in[1,x^{c}]$ and $k\ge1$, we have
\[
N_{c}(x;a,q)
=
\frac{x}{q}
+
O(x^{1-\frac{k-c}{2^{k}-1}}q^{-\frac{1}{2^{k}-1}}),
\]
where the implicit constant depends only on $c$ and $k$.
\end{theorem}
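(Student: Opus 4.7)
The plan is to convert the congruence condition into a sawtooth sum, linearise by Vaaler's trigonometric approximation, and estimate the resulting exponential sums by iterated van der Corput. The case $k=1$ reduces to the trivial estimate $N_{c}(x;a,q)\le x^{c}/q+O(1)$ (valid for $c>1$, since $n\mapsto[n^{c}]$ is eventually injective), so I focus on $k\ge 2$.

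For the reduction step, I would note that $[n^{c}]\equiv a\mod{q}$ holds if and only if some integer $m$ satisfies $qm+a\le n^{c}<qm+a+1$, whence
\[
\mathbf{1}_{[n^{c}]\equiv a\,\mod{q}}
=
\Bigl\lfloor\tfrac{n^{c}-a}{q}\Bigr\rfloor
-
\Bigl\lfloor\tfrac{n^{c}-a-1}{q}\Bigr\rfloor.
\]
Writing $\lfloor y\rfloor=y-\psi(y)-\tfrac{1}{2}$ with $\psi(y)=\{y\}-\tfrac{1}{2}$ and summing over $n\le x$ yields
\[
N_{c}(x;a,q)
=
\frac{x}{q}+O(1)
+
\sum_{n\le x}\Bigl[\psi\Bigl(\tfrac{n^{c}-a-1}{q}\Bigr)-\psi\Bigl(\tfrac{n^{c}-a}{q}\Bigr)\Bigr].
\]
Vaaler's approximation of $\psi$ by a trigonometric polynomial of degree $H$ then reduces the problem, up to an error $O(x/H)$, to estimating the exponential sums $S_{h}\coloneqq\sum_{n\le x}e(hn^{c}/q)$ for $1\le h\le H$ with weights $\ll 1/h$.

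For the exponential sum bound, I would apply the van der Corput method. Since $c\notin\mathbb{Z}$, every derivative $(hn^{c}/q)^{(j)}=(h/q)\,c(c-1)\cdots(c-j+1)\,n^{c-j}$ is nonzero and of monomial type on $[1,x]$, with implied constants depending only on $c$, so the phase $hn^{c}/q$ belongs to the class on which the theory of exponent pairs applies uniformly in $h$ and $q$. Using the exponent pair $A^{k-2}B(0,1)=(\kappa,\lambda)$ with $\kappa=1/(2^{k}-2)$ and $\lambda=(2^{k}-k-1)/(2^{k}-2)$ --- obtained from the trivial pair $(0,1)$ by one $B$-step (Poisson summation with stationary phase) followed by $k-2$ $A$-steps (Weyl differencing) --- yields
\[
S_{h}\ll\Bigl(\tfrac{hx^{c-1}}{q}\Bigr)^{\kappa}x^{\lambda}
=h^{\kappa}\,x^{(c-1)\kappa+\lambda}\,q^{-\kappa}.
\]

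Finally, summing $|S_{h}|/h$ over $1\le h\le H$ using $\sum_{h\le H}h^{\kappa-1}\ll H^{\kappa}$ gives a total error of $\ll H^{\kappa}x^{(c-1)\kappa+\lambda}q^{-\kappa}+x/H$; balancing with $H^{\kappa+1}=x^{1-(c-1)\kappa-\lambda}q^{\kappa}$ and substituting the above values of $\kappa,\lambda$ gives, after the algebraic identity $(2^{k}-2)-(c-1)-(2^{k}-k-1)=k-c$, the claimed bound $x^{1-(k-c)/(2^{k}-1)}q^{-1/(2^{k}-1)}$. When the optimal $H$ falls below $1$, the trivial estimate $N_{c}(x;a,q)\le x^{c}/q+O(1)$ already covers the required bound in view of $q\le x^{c}$. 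The chief technical burden lies in the exponential sum step: verifying that the phase $hn^{c}/q$ falls into the class for which each successive $A$ and $B$ transform is applicable with constants depending only on $c$ and $k$, which ultimately reduces to tracking the monotonicity and monomial-type behaviour of the $j$th derivative $\asymp (h/q)x^{c-j}$ for $j\le k$. The non-integrality of $c$ is precisely what guarantees that the coefficients $c(c-1)\cdots(c-j+1)$ do not vanish.
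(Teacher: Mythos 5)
Your proposal follows essentially the same route as the paper's proof: a Fourier-analytic reduction of the congruence count to the exponential sums $\sum_{n}e(hn^{c}/q)$, a van der Corput--type bound for those sums, and a balancing of the truncation parameter $H$. The differences are cosmetic: you use Vaaler's approximation to $\psi$ where the paper uses Montgomery's form of the Erd\H{o}s--Tur\'an inequality (\cref{lem:ErdosTuran}), and you cite the exponent pair $A^{k-2}B(0,1)$ where the paper invokes van der Corput's $k$-th derivative estimate (\cref{lem:k_dev}); both devices give identical exponents, as your closing arithmetic confirms, and the trivial reduction to $k=1$ is the same.

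Two small points, however, are imprecise. First, before applying the exponent-pair bound or the $k$-th derivative test, the range $1\le n\le x$ must be split into dyadic blocks: these lemmas need the derivatives $f^{(r)}(n)\asymp(h/q)n^{c-r}$ to be of comparable size across the whole summation range, which holds only on a block $(M,2M]$. The top block dominates, so nothing is lost, but this step must appear (the paper's proof carries it out explicitly). Second, the fallback you propose for the case ``optimal $H<1$'' does not work as stated. The optimal $H$ satisfies $H=x^{(k-c)/(2^{k}-1)}q^{1/(2^{k}-1)}$, so $H<1$ happens exactly when $q<x^{c-k}$, which is possible only for $c>k$. For such $q$ the trivial estimate $N_{c}(x;a,q)\le x^{c}/q+O(1)$ does \emph{not} give $N_{c}(x;a,q)-x/q\ll x^{1-(k-c)/(2^{k}-1)}q^{-1/(2^{k}-1)}$, because $x^{c}/q$ then exceeds the claimed error term. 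What rescues the theorem there is cruder: $q\le x^{c-k}$ forces $x^{1-(k-c)/(2^{k}-1)}q^{-1/(2^{k}-1)}\ge x$, while trivially $|N_{c}(x;a,q)-x/q|\le 2x$. The paper sidesteps this by using the optimization \cref{lem:optimize} over $H\in(0,qM]$, which accounts for the endpoint behaviour automatically, and by first disposing of $q\ge x^{c-1/2}$ via the $k=1$ case.
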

\begin{corollary}
\label{cor:PS_AP}
For $c,x\ge1$, $k\in\mathbb{N}$ and a square-free number $d\in[1,x^{c}]$, we have
\[
N_{c}(x;0,d)
=
\frac{x}{d}
+
O(x^{1-\frac{k-c}{2^{k}-1}}d^{-\frac{1}{2^{k}-1}}),
\]
where the implicit constant depends only on $c$ and $k$.
\end{corollary}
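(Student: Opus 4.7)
The plan is to derive the corollary from Theorem~\ref{thm:PS_AP} by a case split on whether $c$ is an integer. For $c \in (1,\infty) \setminus \mathbb{Z}$, Theorem~\ref{thm:PS_AP} applies directly with $a=0$ and yields exactly the claimed estimate, so no work is needed. The remaining cases are $c \in \mathbb{Z}_{\ge 1}$ (including the boundary $c = 1$, which is excluded from Theorem~\ref{thm:PS_AP}'s hypothesis), and these I would dispose of by an elementary divisibility argument.

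For $c \in \mathbb{Z}_{\ge 1}$, I would exploit the fact that $[n^{c}] = n^{c}$, so $N_{c}(x;0,d)$ simply counts $n \le x$ with $d \mid n^{c}$. Here the square-freeness of $d$ is essential: if $p$ is a prime dividing $d$, then $p \mid n^{c}$ forces $p \mid n$, and since $d$ is square-free this upgrades to $d \mid n$. Consequently $d \mid n^{c}$ is equivalent to $d \mid n$, which gives the exact identity
\[
N_{c}(x;0,d)
=
\left\lfloor \frac{x}{d} \right\rfloor
=
\frac{x}{d}
+
O(1).
\]

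The only remaining task is to verify that the $O(1)$ error fits inside the claimed bound $O(x^{1-(k-c)/(2^{k}-1)} d^{-1/(2^{k}-1)})$. Raising to the $(2^{k}-1)$-th power, this reduces to checking $d \ll x^{2^{k}-1-k+c}$. Since $d \le x^{c}$ by hypothesis and $x \ge 1$, it suffices to note $2^{k} - 1 - k \ge 0$ for every $k \ge 1$, which is the elementary inequality $2^{k} \ge k+1$. Combining this bookkeeping with the direct application of Theorem~\ref{thm:PS_AP} in the non-integer case completes the proof. There is no real obstacle here; the only subtle point is the trivial exponent check in the integer case, and this is immediate.
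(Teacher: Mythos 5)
Your proposal is correct and follows essentially the same route as the paper: split on whether $c$ is an integer, apply \cref{thm:PS_AP} directly in the non-integer case, and in the integer case use square-freeness to reduce $d\mid n^{c}$ to $d\mid n$ so that $N_{c}(x;0,d)=\lfloor x/d\rfloor$, then absorb the $O(1)$ via the inequality $2^{k}\ge k+1$. The paper writes the final exponent check as $x^{1-\frac{k-c}{2^{k}-1}}d^{-\frac{1}{2^{k}-1}}\ge x^{1-\frac{k}{2^{k}-1}}\ge1$ rather than raising to the $(2^{k}-1)$-th power, but the two verifications are trivially equivalent.
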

Note that \cref{thm:PS_AP} with $k=1,2,3$ gives \cref{prethm:Deshouillers_PS_AP}.
In particular, we give a complete proof of the third error term estimate in \cref{prethm:Deshouillers_PS_AP}.
(Though, it's true that its enough outline had been already given in \cite[Subsection~3.C]{Deshouillers:CubeFree}.)
Our proof of \cref{thm:PS_AP} is based on the ``direct'' approach
in the terminology of Deshouillers~\cite{Deshouillers:CubeFree}
and van der Corput's $k$-th derivative estimate
as in \cite{Deshouillers:ComptesRendus,Deshouillers:CubeFree}.
Note that some known results,
e.g.\ Proposition~2.1.1 of Delmer--Deshouillers~\cite{DelmerDeshouillers},
has a comparable strength to \cref{thm:PS_AP}
except that we have explicit exponents in the error term.
Although some improvements can be expected for large $c$
by using Vinogradov's mean value theorem,
we do not pursue such an idea for simplicity.
\begin{remark}
\label{rem:PS_AP}
Note that
\begin{align}
&
x^{1-\frac{k-c}{2^{k}-1}}
q^{-\frac{1}{2^{k}-1}}
\ge
x^{1-\frac{k+1-c}{2^{k+1}-1}}
q^{-\frac{1}{2^{k+1}-1}}
\\
&\iff
x^{(k+1-c)(2^{k}-1)}
q^{-(2^{k+1}-1)}
\ge
x^{(k-c)(2^{k+1}-1)}
q^{-(2^{k}-1)}\\
&\iff
x^{(1-(k-c))2^{k}-1}
\ge
q^{2^{k}}
\iff
x^{c+1-k-\frac{1}{2^{k}}}
\ge
q.
\end{align}
Since $k+\frac{1}{2^{k}}$ is increasing, the best possible $k\ge1$ in \cref{thm:PS_AP} is determined by the condition
\[
x_{k}
<q
\le
x_{k-1}
\quad\text{with}\quad
x_{k}
\coloneqq
x^{c+1-k-\frac{1}{2^{k}}}.
\]
Note that $x_{0}=x^{c}$ is the upper limit of $q$.
These ranges coincide with Th\'{e}or\`{e}me~1
of \cite{Deshouillers:ComptesRendus}.
\end{remark}

A result similar to \cref{thm:main_thm} with the Piatetski-Shapiro sequence of arbitrary order $c>1$
was obtained by Delmer and Deshouillers~\cite[Theorem~1]{DelmerDeshouillers},
which was later generalized by Bergelson and Richter~\cite{BergelsonRichter:CoprimeHardyField} to the tuples of the sequences
and even to the sequence defined by functions from Hardy fields.
For simplicity, we recall the result of Delmer and Deshouillers
(which was originally an open problem raised by Moser, Lambek and Erd\H{o}s):
\begin{pretheorem}
\label{prethm:DelmerDeshouillers}
For $c\in(0,\infty)\setminus\mathbb{Z}$ and $x\ge1$, we have
\[
\sum_{\substack{
n\le x\\
(n,[n^{c}])=1
}}
1
\sim
\frac{1}{\zeta(2)}x
\]
as $x\to\infty$.
\end{pretheorem}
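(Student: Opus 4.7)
The plan is to reduce the coprimality count to an equidistribution problem via Möbius inversion and then apply the same machinery that drives \cref{thm:PS_AP}. Writing
\[
\mathbf{1}[(n,[n^{c}])=1] = \sum_{d\mid (n,[n^{c}])}\mu(d)
\]
and switching the order of summation gives
\[
S(x) := \sum_{\substack{n\le x\\(n,[n^{c}])=1}}1 = \sum_{d\ge 1}\mu(d)\,f(x,d),\qquad
f(x,d) := \#\{n\le x : d\mid n,\ d\mid [n^{c}]\}.
\]
Setting $n=dm$ and observing that $d\mid[(dm)^{c}]\iff\{d^{c-1}m^{c}\}<1/d$ yields
\[
f(x,d) = \#\{m\le x/d : \{d^{c-1}m^{c}\}<1/d\},
\]
so heuristically $f(x,d)\approx x/d^{2}$; together with $\sum_{d\ge 1}\mu(d)/d^{2}=1/\zeta(2)$ this would produce the claimed asymptotic.

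For $d$ up to a threshold $D$, I would justify $f(x,d)=x/d^{2}+O(\text{error})$ by approximating $\mathbf{1}_{[0,1/d)}$ by a Vaaler-type trigonometric polynomial and bounding the resulting exponential sums
\[
T_{h} := \sum_{m\le x/d} e(h\,d^{c-1}m^{c}),\qquad 1\le h\le H,
\]
via van der Corput's $k$-th derivative test. Since the $k$-th derivative of $h\,d^{c-1}m^{c}$ has size $|h|\,d^{c-1}(x/d)^{c-k}$, the estimation of $T_{h}$ runs in complete parallel with the proof of \cref{thm:PS_AP}, only with the rescaled sequence $d^{c-1}m^{c}$ in place of $m^{c}$. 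Optimizing $H$ and summing against the Vaaler coefficients then gives a power-saving bound of the shape $f(x,d)=x/d^{2}+O((x/d)^{1-\delta}\log x)$ for some $\delta=\delta(c,k)>0$, whose contribution to $\sum_{d\le D}\mu(d)f(x,d)$ is $o(x)$ for any $D$ growing slowly enough with $x$.

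The delicate step is the tail $d>D$, where the trivial bound $f(x,d)\le x/d$ already costs $\sum_{d>D}f(x,d)\ll x\log(x/D)$ — still too large — and no obvious sign cancellation from $\mu(d)$ is available. To salvage this I would decompose the range dyadically and apply on each dyadic piece the sharper inequality $f(x,d)\le N_{c}(x;0,d)$ together with \cref{cor:PS_AP}; the availability of that corollary for all $c\in(1,\infty)\setminus\mathbb{Z}$ and all moduli up to $x^{c}$ — well beyond the range $c<2$ of \cref{prethm:Deshouillers_PS_AP} — is precisely what allows the tail to be kept at size $o(x)$ after a careful balancing of $D$ and the parameter $k$ against the small-$d$ estimate. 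This tail analysis is the main obstacle: it is the step where the full strength of \cref{thm:PS_AP} becomes indispensable, and it is what ultimately allows the theorem to hold in the full range $c\in(0,\infty)\setminus\mathbb{Z}$ (the case $c\in(0,1)$ reducing, after a dyadic decomposition of the values of $[n^{c}]$, to a twisted Euler totient sum).
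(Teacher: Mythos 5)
The paper does not prove \cref{prethm:DelmerDeshouillers}: it cites the result (Theorem~1 of Delmer--Deshouillers, later generalized by Bergelson--Richter) and only outlines the shape of their argument in \cref{sec:intro}. Your opening steps --- the M\"obius expansion $S(x)=\sum_{d}\mu(d)f(x,d)$, the substitution $n=dm$ yielding $f(x,d)=\#\{m\le x/d:\{d^{c-1}m^{c}\}<1/d\}$, and Erd\H{o}s--Tur\'an/Vaaler together with van der Corput on the small-$d$ range --- track the general equidistribution machinery correctly. The tail, however, is where the theorem's real content lies, and your proposed fix does not close it. The inequality $f(x,d)\le N_{c}(x;0,d)$ is \emph{not} sharper than what you already have: the condition $d\mid n$ forces $f(x,d)\le\lfloor x/d\rfloor$ directly, whereas \cref{cor:PS_AP} only gives $N_{c}(x;0,d)=x/d+O(\cdot)$ with an error that for large $d$ comfortably exceeds $x/d$, so the combined bound is still just $f(x,d)\ll x/d$. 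But $\sum_{D<d\le x}x/d\asymp x\log(x/D)$, which is $\gg x\log x$ for any fixed power $D=x^{\theta}$ with $\theta<1$, and is never $o(x)$ unless $D$ is within a factor $1+o(1)$ of $x$ --- a range in which the exponential-sum estimate for $d\le D$ breaks down because the inner length $x/d$ is bounded. What is actually needed on the tail is $f(x,d)\ll x/d^{2}+(\text{small error})$, and neither \cref{cor:PS_AP} nor the divisibility restriction $d\mid n$ supplies the second power of $d$.

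The device you are missing --- and the paper says so explicitly in \cref{sec:intro} when contrasting $S_{\mathrm{DD}}$ with $S$ --- is the smooth/rough decomposition of $d$ used by Delmer--Deshouillers and by Bergelson--Richter: truncate the inclusion--exclusion to squarefree $d$ supported on primes $<z$ (where the equidistribution analysis applies and the constant $1/\zeta(2)$ emerges as $z\to\infty$), and bound the remaining correction by a sum over \emph{primes} $p\ge z$ of $\#\{n\le x:p\mid n,\ p\mid[n^{c}]\}$, which is both much sparser and structurally simpler than $\sum_{d>D}$, and is handled by different means in the medium and large ranges of $p$. Without that idea your plan reproduces the very obstruction the paper flags; the closing remark that $c\in(0,1)$ reduces to a ``twisted Euler totient sum'' is likewise asserted without support and would need its own argument.
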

If we compare two sums
\[
S_{\textrm{DD}}
\coloneqq
\sum_{\substack{
n\le x\\
(n,[n^{c}])=1
}}
1
\and
S
\coloneqq
\sum_{\substack{
m,n\le x\\
([m^{c}],[n^{c}])=1
}}
1
\]
of Delmer--Deshouillers~\cite{DelmerDeshouillers} and of this paper, respectively,
it is true that $S_{\textrm{DD}}$ is more difficult
in the aspect that $S_{\textrm{DD}}$ deals with
the ``correlation'' of two functions over \textit{one} dimensional region
while $S$ just consider the correlation of two functions
over \textit{two} dimensional region.
However, in $S_{\textrm{DD}}$, one function $n$ is of the size $\ll x$
while in $S$, both of the functions can be much larger than $x$,
which effects that the required level of distribution is much larger for the sum $S$.
Indeed, with the usual application of the M\"obius function, we have
\begin{equation}
\label{comparison_two_sum:Mobius_inverted}
S_{\textrm{DD}}
=
\sum_{d\le x}
\mu(d)
\sum_{\substack{
n\le x\\
d\mid (n,[n^{c}])
}}
1
\quad\text{while}\quad
S
=
\sum_{d\le x^{c}}
\mu(d)
\biggl(
\sum_{\substack{
n\le x\\
d\mid [n^{c}]
}}
1
\biggr)^{2},
\end{equation}
thus we need to deal with much larger $d$ in terms of $c$ for the latter sum.
This is the reason why Question~3 of Bergelson--Richter~\cite[Section~7]{BergelsonRichter:CoprimeHardyField}
is rather difficult and cannot be approached
by the method of \cite{BergelsonRichter:CoprimeHardyField,DelmerDeshouillers}.
Namely, the term $n$ in $(n,[n^{c}])$ strongly restricts the size of $d$
and so the sum $S_{\textrm{DD}}$ could be dealt with.

Delmer and Deshouillers~\cite[Section~3]{DelmerDeshouillers}
and Bergelson and Richter~\cite[Proposition~16]{BergelsonRichter:CoprimeHardyField}
overcame the difficulty on the inner sum of \cref{comparison_two_sum:Mobius_inverted}
by decomposing the effect of $d$ into the smooth and rough factors.
(Indeed, in these papers, rough factors are just replaced by prime numbers.)
In our sum $S$, this method does not work since the error term of \cref{thm:PS_AP}
is too weak in the sense that the trivial bound
\begin{align}
&\sum_{d\le x^{c}}
\bigl(\min(x^{c}d^{-1},x^{1-\frac{k-c}{2^{k}-1}}d^{-\frac{1}{2^{k}-1}})\bigr)^{2}\\
&=
\sum_{d\le x^{c-1+\frac{k-1}{2^{k}-1}}}
\bigl(x^{1-\frac{k-c}{2^{k}-1}}d^{-\frac{1}{2^{k}-1}}\bigr)^{2}
+
\sum_{x^{c-1+\frac{k-1}{2^{k}-1}}<d\le x^{c}}
(x^{c}d^{-1})^{2}
\asymp
x^{c+1-\frac{k-1}{2^{k}-1}}
\end{align}
exceeds the main term for large $c$ and any $k$,
the effect of which cannot be annihilated
by the factor $(\log x)^{-1}$ produced by restricting $d$ to rough numbers.
We thus need another trick to deal with the difficulty in the inner sum of \cref{comparison_two_sum:Mobius_inverted}.
Actually, the following very simple trick works and is the key of the proof of \cref{thm:main_thm}:
We apply \cref{thm:PS_AP} to only one factor $N_{c}(x;0,d)$
and swap the remaining sum. This gives
\[
\sum_{D<d\le x^{c}}
\mu(d)N_{c}(x;0,d)^{2}
\ll
xD^{-\delta}
\sum_{D<d\le x^{c}}
\sum_{\substack{
n\le x\\
d\mid[n^{c}]
}}
1
\ll
xD^{-\delta}
\sum_{n\le x}
\tau([n^{c}])
\ll
x^{2+\epsilon}D^{-\delta}
\]
with $D\ge1$ and some $\delta=\delta(c)>0$,
where we used the well-known bound $\tau(n)\ll_{\epsilon}n^{\epsilon}$
for the divisor function.

It is easy to extend \cref{thm:main_thm} to higher dimensions with mixed orders as Theorem~1.2 of \cite{PST}.
Indeed, \cref{thm:main_thm} is a corollary of the following theorem.
\begin{theorem}
\label{thm:main_thm_multi_dim}
For $r,k\in\mathbb{Z}_{\ge2}$, $1\le c_{1}\le\cdots\le c_{r}<k$ and $x\ge1$, we have
\begin{equation}
\label{thm:main_thm_multi_dim:general_k}
\sum_{\substack{
m_{1},\ldots,m_{r}\le x\\
([m_{1}^{c_{1}}],\ldots,[m_{r}^{c_{r}}])=1
}}
1
=
\frac{1}{\zeta(r)}x^{r}
+
O(x^{r-\frac{k-c_{r}}{2^{k}-1}}),
\end{equation}
where the implicit constant depends only on $c_{1},\ldots,c_{r}$ and $k$.
Consequently, for $1\le c_{1}\le\cdots\le c_{r}$ and $x\ge1$, we have
\begin{equation}
\label{thm:main_thm_multi_dim:special_k}
\sum_{\substack{
m_{1},\ldots,m_{r}\le x\\
([m_{1}^{c_{1}}],\ldots,[m_{r}^{c_{r}}])=1
}}
1
=
\frac{1}{\zeta(r)}x^{r}
+
O(x^{r-2^{-(\lceil c_{r}\rceil+1)}}),
\end{equation}
where the implicit constant depends only on $c$.
\end{theorem}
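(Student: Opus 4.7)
My plan is to follow the strategy sketched in the excerpt for the $r = 2$ case of \cref{thm:main_thm}, generalising the swap trick to arbitrary dimension. Applying Möbius inversion to the coprimality condition gives
\[
\sum_{\substack{m_1,\ldots,m_r \le x \\ ([m_1^{c_1}],\ldots,[m_r^{c_r}])=1}} 1 = \sum_{d \le x^{c_1}} \mu(d) \prod_{i=1}^{r} N_{c_i}(x; 0, d),
\]
the truncation at $d \le x^{c_1}$ coming from the factor $N_{c_1}(x; 0, d)$, which vanishes once $d$ exceeds $x^{c_1}$. I would then split the $d$-sum at a threshold $D = x^{\theta}$ with $\theta > 0$ to be chosen at the end.

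For the small range $d \le D$, I would insert the uniform expansion
\[
N_{c_i}(x; 0, d) = \frac{x}{d} + O\bigl(x^{1-(k-c_r)/(2^k-1)}\, d^{-1/(2^k-1)}\bigr)
\]
from \cref{cor:PS_AP} (using $c_i \le c_r$) into every factor and expand the product. The diagonal term
\[
x^{r}\sum_{d \le D}\frac{\mu(d)}{d^{r}} = \frac{x^{r}}{\zeta(r)} + O\bigl(x^{r} D^{-(r-1)}\bigr),
\]
valid since $r \ge 2$, gives the leading main term, while each cross term carrying at least one error factor sums against an absolutely convergent tail (or, for the terms with many error factors, a mild $D$-dependent factor) and contributes at most $O(x^{r-(k-c_r)/(2^k-1)})$.

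For the large range $d > D$, I would use the swap trick. Uniformly on $d > D$, \cref{cor:PS_AP} yields $N_{c_i}(x; 0, d) \ll x D^{-1/(2^k-1)}$; bounding $r - 1$ of the factors this way and retaining $N_{c_1}$ (chosen to minimise the range of summation) for the swap, exchanging summations and invoking the divisor bound $\tau(N) \ll_{\epsilon} N^{\epsilon}$ gives
\[
\sum_{D < d \le x^{c_1}} N_{c_1}(x; 0, d) \le \sum_{n \le x} \tau([n^{c_1}]) \ll_{\epsilon} x^{1+\epsilon},
\]
so this range contributes $\ll x^{r+\epsilon} D^{-(r-1)/(2^k-1)}$. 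Choosing $\theta$ slightly above $(k - c_r)/(r - 1)$ to balance this against $x^{r-(k-c_r)/(2^k-1)}$ (absorbing the $x^{\epsilon}$ into the constant by shrinking the exponent by an arbitrarily small amount, exactly as in the paper's sketch) proves \cref{thm:main_thm_multi_dim:general_k}, and the consequence \cref{thm:main_thm_multi_dim:special_k} follows by specialising to $k = \lceil c_r \rceil + 1$ and using $(k - c_r)/(2^{k} - 1) \ge 2^{-k}$. The main technical obstacle I anticipate is the bookkeeping of the heaviest cross error term $\sum_{d \le D} (x^{1-(k-c_r)/(2^k-1)}\, d^{-1/(2^k-1)})^{r}$, whose tail can grow with $D$ when $r$ is large relative to $k$ and must be verified to remain below the target exponent for the chosen $\theta$; everything else is routine.
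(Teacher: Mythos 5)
Your skeleton matches the paper's proof exactly: Möbius inversion, splitting the $d$-sum at a threshold $D$, estimating $S_1$ by inserting \cref{cor:PS_AP} into every factor, and treating $S_2$ by the swap trick with a divisor bound. The deduction of \cref{thm:main_thm_multi_dim:special_k} from \cref{thm:main_thm_multi_dim:general_k} via $k=\lceil c_r\rceil+1$ is also fine. However, the quantitative details in $S_2$ and the choice of $D$ contain a genuine gap.

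For $d>D$, you invoke only the crude bound $N_{c_i}(x;0,d)\ll xD^{-1/(2^k-1)}$, whereas, once $D\ge x^{(k-c_r)/(2^k-2)}$, \cref{cor:PS_AP} gives the sharper $N_{c_i}(x;0,d)\ll x^{1-(k-c_r)/(2^k-1)}D^{-1/(2^k-1)}$, a gain of $x^{(k-c_r)/(2^k-1)}$ in each of the $r-1$ bounded factors. With your lossy bound, $S_2\ll x^{r+\epsilon}D^{-(r-1)/(2^k-1)}$, and to beat $x^{r-(k-c_r)/(2^k-1)}$ you are forced to take $D\gg x^{(k-c_r)/(r-1)}$. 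But that exponent can exceed $c_1$ (take $r=2$, $k=3$, $c_1=c_2=1.4$: then $(k-c_r)/(r-1)=1.6>c_1$, so the split is already vacuous), and even when it doesn't, it blows up the $S_1$ cross term $x^{r-r(k-c_r)/(2^k-1)}D^{1-r/(2^k-1)}$ (for $r=2$, $k=3$, $c=1.8$ this becomes $x^{2.51}$, larger than the main term). Your closing diagnosis that this cross term misbehaves ``when $r$ is large relative to $k$'' is reversed: the partial sum $\sum_{d\le D}d^{-r/(2^k-1)}$ grows with $D$ precisely when $r<2^k-1$, i.e.\ when $k$ is large relative to $r$, which is exactly the regime of interest. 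The paper's sharper $S_2$ bound $x^{r-(r-1)(k-c_r)/(2^k-1)+\epsilon}D^{-(r-1)/(2^k-1)}$ permits the much smaller choice $D=x^{(k-c_r)/(2^k-2)}$ (together with $\epsilon=\frac{k-c_r}{(2^k-2)(2^k-1)}$), which lies in $[1,x^{c_1}]$ and simultaneously keeps all three $S_1$ error terms and $S_2$ at or below $x^{r-(k-c_r)/(2^k-1)}$. Your choice of which factor to retain ($N_{c_1}$ versus the paper's $N_{c_r}$) is immaterial; the missing ingredient is the sharper pointwise bound on the bounded factors in $S_2$.
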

Note that \cref{thm:main_thm_multi_dim} not only extends the admissible range
of order $c_{1},\ldots,c_{r}$ in \cite{PST},
but also improves the error term estimate in some cases,
e.g.\ Theorem~1.2 of \cite{PST} in the case $\frac{5}{4}<c<2$.

\section{Notation}
\label{sec:notation}
Throughout the paper,
the letters
$D,F,H,\delta,\epsilon$ denote positive real numbers,
$b,x,\alpha,\beta$ denote real numbers and
$d,h,k,m,n,q,r,K,M$ denote positive integers.
The letter $a$ denotes a real number or an integer.
The letter $N$ denote a positive integer or a positive real number.
The letter $c$ is a real number $\ge1$ used as the order of the Piatetski-Shapiro sequences.
The functions $\tau(n),\mu(n)$ stand for
the divisor function (the number of divisors of $n$) and the M\"obius function.
The function $\zeta(s)$ stands for the Riemann zeta function.

For integers $m,n$, we write $(m,n)$ for the greatest common divisor of $m$ and $n$,
which can be easily distinguished from pairs or open intervals.

For a real number $x$, the symbol $[x]$ stands for the largest integer $\le x$
and $e(x)\coloneqq\exp(2\pi ix)$.

For $x\ge1$ and $a,q\in\mathbb{Z}$ with $q\ge1$, let
\[
N_{c}(x;a,q)
\coloneqq
\sum_{\substack{
n\le x\\
[n^{c}]\equiv a\ \mod{q}
}}
1.
\]

For a logical formula $P$,
we write $\mathbbm{1}_{P}$
for the indicator function of $P$.

If Theorem or Lemma is stated
with the phrase ``where the implicit constant depends on $a,\ldots,b$'',
then every implicit constant in the corresponding proof
may also depend on $a,\ldots,b$ even without special mentions.

\section{Preliminary Lemmas}
\label{sec:prelim}
For Deshouillers' direct approach, we need a version of Erd\H{o}s--Tur\`{a}n inequality:
\begin{lemma}
\label{lem:ErdosTuran}
For $(x_{n})_{n=1}^{N}$ be a real sequence,
$0\le\alpha\le\beta<1$ and $H>0$, we have
\[
\biggl|
\sum_{n=1}^{N}\mathbbm{1}_{\{x_{n}\}\in[\alpha,\beta)}
-
(\beta-\alpha)N
\biggr|
\ll
\frac{N}{H}
+
\sum_{h=1}^{H}
\biggl(
\frac{1}{H}
+
\min\biggl(\beta-\alpha,\frac{1}{h}\biggr)
\biggr)
\biggl|\sum_{n=1}^{N}e(hx_{n})\biggr|,
\]
where the implicit constant is absolute.
\end{lemma}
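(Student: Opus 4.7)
The plan is to prove this via the classical trigonometric polynomial majorant/minorant technique for the indicator of an arc. Concretely, I would construct Selberg/Vaaler-type majorants $\psi_{H}^{\pm}(x)$ of the periodic extension of $\mathbbm{1}_{[\alpha,\beta)}$, namely trigonometric polynomials of degree at most $H$ satisfying $\psi_{H}^{-}(x)\le\mathbbm{1}_{[\alpha,\beta)}(\{x\})\le\psi_{H}^{+}(x)$ for all $x\in\R$, together with the two key Fourier estimates
\[
\widehat{\psi_{H}^{\pm}}(0)=(\beta-\alpha)+O\!\left(\tfrac{1}{H}\right),\qquad
\bigl|\widehat{\psi_{H}^{\pm}}(h)\bigr|\ll \tfrac{1}{H}+\min\!\left(\beta-\alpha,\tfrac{1}{h}\right)\ \text{for }1\le h\le H.
\]
The existence of such majorants is standard (built from the Beurling function and the Fej\'er kernel); the $\min(\beta-\alpha,1/h)$ factor reflects the true Fourier decay of the indicator of an arc of length $\beta-\alpha$, while the uniform $O(1/H)$ slack is the price paid for turning that indicator into a trigonometric polynomial of degree $H$.

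Next I would sandwich the sum of interest: for every choice of sign,
\[
\sum_{n=1}^{N}\psi_{H}^{-}(x_{n})
\le
\sum_{n=1}^{N}\mathbbm{1}_{\{x_{n}\}\in[\alpha,\beta)}
\le
\sum_{n=1}^{N}\psi_{H}^{+}(x_{n}),
\]
and then expand the outer sums as finite Fourier series:
\[
\sum_{n=1}^{N}\psi_{H}^{\pm}(x_{n})
=
\widehat{\psi_{H}^{\pm}}(0)\,N
+
\sum_{0<|h|\le H}\widehat{\psi_{H}^{\pm}}(h)\sum_{n=1}^{N}e(hx_{n}).
\]
Subtracting $(\beta-\alpha)N$ and using $\widehat{\psi_{H}^{\pm}}(0)=(\beta-\alpha)+O(1/H)$ produces a main-term error of size $O(N/H)$. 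For the non-zero frequencies, I pair $h$ with $-h$ using $|\sum_{n}e(-hx_{n})|=|\sum_{n}e(hx_{n})|$, so the total oscillating contribution is bounded by
\[
\sum_{h=1}^{H}\bigl(|\widehat{\psi_{H}^{\pm}}(h)|+|\widehat{\psi_{H}^{\pm}}(-h)|\bigr)\Bigl|\sum_{n=1}^{N}e(hx_{n})\Bigr|
\ll
\sum_{h=1}^{H}\left(\tfrac{1}{H}+\min\!\left(\beta-\alpha,\tfrac{1}{h}\right)\right)\Bigl|\sum_{n=1}^{N}e(hx_{n})\Bigr|,
\]
which is exactly the right-hand side of the claimed inequality.

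The main obstacle is really just producing the majorant $\psi_{H}^{\pm}$ with the sharp twin bound $\tfrac{1}{H}+\min(\beta-\alpha,1/h)$ on its Fourier coefficients, rather than the coarser $1/h$ that appears in the textbook Erd\H{o}s--Tur\'an inequality. This refinement matters here because in the direct approach to \cref{thm:PS_AP} one needs to exploit the genuine arc length $\beta-\alpha$, which will be small (of order $x^{-(c-1)}$). Once that construction is quoted (or, alternatively, assembled from the Selberg majorants for an arc, which give precisely these coefficient bounds), the rest of the argument is bookkeeping: sandwich, expand, pair $\pm h$, and separate the $1/H$ and $\min(\beta-\alpha,1/h)$ contributions.
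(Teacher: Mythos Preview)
Your sketch is correct and is precisely the Selberg--Vaaler majorant argument that underlies the cited result; the paper does not give an independent proof but simply refers to Montgomery~\cite[Theorem~1, p.~8]{Montgomery:Harmonic} (adding only the trivial observation for $0<H\le 1$), and Montgomery's proof proceeds exactly along the lines you outline. One inessential slip: in the application the arc length $\beta-\alpha$ equals $1/q$, not $x^{-(c-1)}$, but your point that the refined $\min(\beta-\alpha,1/h)$ bound is genuinely needed in the proof of \cref{thm:PS_AP} is correct.
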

\begin{proof}
See Montgomery~\cite[Theorem~1, p.~8]{Montgomery:Harmonic}.
Note that the assertion is trivial if $0<H\le 1$.
\end{proof}

We also recall van der Corput's $k$-th derivative estimate:
\begin{lemma}
\label{lem:k_dev}
Let $k\ge2$ be an integer, $a,b,N,F,A_1,\ldots,A_k$ be real numbers with
\[
0\le b-a\le N,\quad
F>0,\quad
N\ge1,\quad
A_1,\ldots,A_k\ge1,
\]
and $f$ be a real valued function defined on $(a,b]$ satisfying\textup{:}
\begin{enumerate}[label=\textup{(\roman*)}]
\item\label{k_dev_sum_model:smooth}
The function $f$ is $k$-times continuously differentiable on $(a,b]$.
\item\label{k_dev_sum_model:derivative_bound}
We have $A_{r}^{-1}FN^{-r}\le|f^{(r)}(x)|\le A_{r}FN^{-r}$ for $x\in(a,b]$ and $1\le r\le k$.
\end{enumerate}
Then, we have
\[
\sum_{a<n\le b}e(f(n))
\ll
F^{\frac{1}{2^k-2}}N^{1-\frac{k}{2^k-2}}+F^{-1}N,
\]
where the implicit constant depends only on $k,A_{1},\ldots,A_{k}$.
\end{lemma}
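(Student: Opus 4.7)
My plan is to prove this by induction on $k\ge2$ using the Weyl--van der Corput $A$-process (Cauchy--Schwarz with a shift), bootstrapping from the Kuzmin--Landau first derivative estimate.

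For the base case $k=2$, apply the $A$-process with a shift parameter $1\le H\le N$ to obtain
\[
H|S|^{2}\ll NH+N\sum_{1\le h<H}\bigg|\sum_{n}e(g_{h}(n))\bigg|,
\]
where $g_{h}(x)\coloneqq f(x+h)-f(x)$. By the mean value theorem $g_{h}'(x)=hf''(\xi)$ for some $\xi\in(x,x+h)$, so $|g_{h}'(x)|\asymp hF/N^{2}$, and monotonicity of $f''$ ensures $g_{h}'$ is monotone. The Kuzmin--Landau first derivative test then gives $|\sum_{n}e(g_{h}(n))|\ll N^{2}/(hF)+1$; summing over $h$ and optimizing $H$ recovers the base case bound $F^{1/2}+F^{-1}N$.

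For the inductive step $k\to k+1$, assume the lemma holds at stage $k$ and let $f$ satisfy the hypotheses at stage $k+1$. Apply the $A$-process with shift $H$. Since $g_{h}^{(k)}(x)=hf^{(k+1)}(\xi)\asymp hF/N^{k+1}=F_{h}N^{-k}$ with $F_{h}\coloneqq hF/N$, and the derivative sign and monotonicity are inherited from $f^{(k+1)}$, the induction hypothesis applied to $g_{h}$ yields
\[
\bigg|\sum_{n}e(g_{h}(n))\bigg|\ll F_{h}^{1/(2^{k}-2)}N^{1-k/(2^{k}-2)}+F_{h}^{-1}N.
\]
Substituting into the $A$-process, the main term sums as $\sum_{h<H}h^{1/(2^{k}-2)}\asymp H^{1+1/(2^{k}-2)}$ (the $F_{h}^{-1}N$ contribution is absorbed into the final $F^{-1}N$ term up to a harmless $\log$ factor, which is itself absorbed into the implicit constant after a routine partial summation trick or simply by enlarging the constant in the final optimization). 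Balancing the resulting $H$-dependent bound against the diagonal $NH$ by choosing $H$ optimally produces the target exponent $1/(2^{k+1}-2)$ on $F$ and $1-(k+1)/(2^{k+1}-2)$ on $N$.

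The main technical obstacle is the exponent bookkeeping: one must verify that the optimal $H$ lies in $[1,N]$ in the relevant regime, and the boundary cases (where $H$ would exceed $N$, so the $A$-process degenerates into the trivial bound $|S|\le N$) are precisely what produces the persistent second term $F^{-1}N$ throughout the induction. A secondary care is the verification that the constants $A_{r}$ for $g_{h}$ are controlled uniformly in $h\le H\le N$ by the constants $A_{r+1}$ for $f$, which is immediate from the mean value theorem representation of $g_{h}^{(r)}$. Since this lemma is entirely standard, a cleaner alternative would simply be to cite, for instance, Theorem~2.8 of Ivi\'{c}'s \emph{The Riemann Zeta Function} or Chapter~3 of Graham--Kolesnik.
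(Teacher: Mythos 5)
The paper itself does not prove this lemma: its proof is the single line ``See Theorem~2.9 of Graham--Kolesnik,'' so your closing suggestion to simply cite that source coincides exactly with what the paper does. Your inductive sketch via the Weyl--van der Corput $A$-process is indeed the correct high-level outline of the Graham--Kolesnik proof, but the base case $k=2$ as you describe it contains a genuine gap. You write that ``monotonicity of $f''$ ensures $g_{h}'$ is monotone'' so that Kuzmin--Landau can be applied to $\sum_{n}e(g_{h}(n))$. However, the hypothesis at $k=2$ only provides $|f''|\asymp FN^{-2}$, which forces $f''$ to have constant sign (hence $f'$ is monotone) but does \emph{not} make $f''$ monotone. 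Consequently $g_{h}''(x)=f''(x+h)-f''(x)$ can change sign under the stated hypotheses, $g_{h}'$ need not be monotone, and the Kuzmin--Landau estimate is not applicable to $g_{h}$.

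The standard remedy is to prove the $k=2$ case directly rather than by a further $A$-process: since $f''$ has constant sign, $f'$ is monotone and its range is an interval of length $\ll FN^{-1}$; partition $(a,b]$ according to whether $\|f'\|\ge\delta$ or $\|f'\|<\delta$, apply Kuzmin--Landau to $f$ itself on the pieces of the first kind (contributing $\ll\delta^{-1}$ each) and bound the pieces of the second kind by their lengths $\ll\delta N^{2}F^{-1}$, then optimize $\delta$. Combined with the $r=1$ hypothesis $|f'|\asymp FN^{-1}$ (which supplies the $F^{-1}N$ term via Kuzmin--Landau applied directly to $f$ when $F$ is small compared with $N$), this yields the claimed bound at $k=2$. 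Your $A$-process induction then proceeds for $k\ge3$ as you outline, with the $H$-optimization and log-absorption being the remaining routine technicalities you flag; all of this is, in any event, subsumed by the citation the paper actually gives.
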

\begin{proof}
See Theorem~2.9 of Graham--Kolesnik~\cite{GrahamKolesnik}.
\end{proof}

We optimize the parameters
by the following well-known process:
\begin{lemma}
\label{lem:optimize}
Let $(a,b)\subset\mathbb{R}$ be a non-empty open interval not necessarily finite,
\[
(F_m)_{m\in\mathcal{M}}\and
(G_n)_{n\in\mathcal{N}}
\]
be real-valued continuous functions
indexed by non-empty finite sets $\mathcal{M},\mathcal{N}$
defined on sets $I_{m},J_{n}\subset\mathbb{R}$ with $(a,b)\subset I_{m},J_{n}$,
respectively, such that
$F_m(x)$ are all non-decreasing and
$G_n(x)$ are all non-increasing with respect to $x$,
where the topologies of $I_{m},J_{n}$ are the subspace topologies
inherited from $\mathbb{R}$.
Let
\[
\mathcal{A}
\coloneqq
\Bigl\{\lim_{x\searrow a}F_m(x)\mathrel{\Big|} m\in\mathcal{M}\Bigr\},\quad
\mathcal{B}
\coloneqq
\Bigl\{\lim_{x\nearrow b}G_n(x)\mathrel{\Bigg|} n\in\mathcal{N}\Bigr\},
\]
\[
\mathcal{C}
\coloneqq
\{C\in\mathbb{R}\mid\text{$C=F_m(x)=G_n(x)$
for some $m\in\mathcal{M}$, $n\in\mathcal{N}$ and $x\in I_{m}\cap J_{n}$}\}.
\]
Consider the function
\[
\Phi(x)\coloneqq
\max(\max_{m\in\mathcal{M}}F_m(x),
\max_{n\in\mathcal{N}}G_n(x)).
\]
Then, we have
\[
\inf_{x\in(a,b)}\Phi(x)
=
\max(\mathcal{A}\cup\mathcal{B}\cup\mathcal{C}),
\]
where the right-hand side exists possibly being $-\infty$.
\end{lemma}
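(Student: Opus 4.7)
The plan is to reduce the problem to the two upper envelopes
\[
F(x) \coloneqq \max_{m \in \mathcal{M}} F_m(x), \qquad G(x) \coloneqq \max_{n \in \mathcal{N}} G_n(x),
\]
both of which are continuous on $(a,b)$ because $\mathcal{M},\mathcal{N}$ are finite and each $F_m, G_n$ is continuous on a set containing $(a,b)$. The envelope $F$ inherits non-decreasingness and $G$ inherits non-increasingness, so $\Phi = \max(F,G)$ is a continuous function on $(a,b)$. Since finite maxima commute with monotone limits, the limits $L_a \coloneqq \lim_{x\searrow a} F(x)$ and $L_b \coloneqq \lim_{x\nearrow b} G(x)$ exist in $\mathbb{R}\cup\{-\infty\}$ and equal $\max\mathcal{A}$ and $\max\mathcal{B}$ respectively.

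For the lower bound $\inf_{x\in(a,b)}\Phi(x) \ge \max(\mathcal{A}\cup\mathcal{B}\cup\mathcal{C})$, I would check each generator of the right-hand side. Any $\alpha = \lim_{x\searrow a} F_m(x)\in\mathcal{A}$ satisfies $\Phi(x)\ge F_m(x) \ge \alpha$ on $(a,b)$ by monotonicity of $F_m$, and the case of $\mathcal{B}$ is symmetric. For $C\in\mathcal{C}$ witnessed by $F_m(x_0)=G_n(x_0)=C$ with $x_0\in I_m\cap J_n$, I split $(a,b)$ at $x_0$: when $x\ge x_0$ the monotonicity of $F_m$ gives $\Phi(x)\ge F_m(x)\ge C$, and when $x\le x_0$ the monotonicity of $G_n$ gives $\Phi(x)\ge G_n(x)\ge C$.

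For the matching upper bound, I would study the sign of the continuous non-decreasing function $H\coloneqq F-G$ on $(a,b)$, which splits into three cases. If $H\ge 0$ throughout, then $\Phi=F$ and $\inf_{(a,b)}\Phi = L_a = \max\mathcal{A}$; if $H\le 0$ throughout, then $\Phi=G$ and $\inf_{(a,b)}\Phi = L_b = \max\mathcal{B}$. Otherwise $H$ takes both signs, so by the intermediate value theorem there is $x_0\in(a,b)$ with $F(x_0)=G(x_0)$; picking indices $m,n\in\mathcal{M}\times\mathcal{N}$ at which these two maxima are attained produces $C\in\mathcal{C}$ with $C=\Phi(x_0)$, and the monotonicity of $F$ on $[x_0,b)$ together with that of $G$ on $(a,x_0]$ shows $\Phi(x_0)=\inf_{(a,b)}\Phi$. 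Combining the two inequalities yields the claimed equality.

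No step is particularly deep: the heart of the argument is the geometric observation that a maximum of a non-decreasing and a non-increasing continuous function attains its minimum either at an endpoint or at a crossing. The only care needed is the bookkeeping when some of $\mathcal{A},\mathcal{B},\mathcal{C}$ are empty or contain $-\infty$, which I would absorb into the three-case split above using the convention $\max\emptyset = -\infty$; the hypothesis $(a,b)\subset I_m\cap J_n$ is exactly what guarantees that any crossing point found inside $(a,b)$ legitimately witnesses membership in $\mathcal{C}$.
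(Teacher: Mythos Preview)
Your argument is correct. The paper itself does not give a proof of this lemma but simply refers to Lemma~2.4 of Graham--Kolesnik and Lemma~3 of Srinivasan, so there is no in-paper argument to compare against; your self-contained treatment via the envelopes $F,G$ and the sign analysis of $F-G$ is exactly the standard way to establish this optimization principle and would serve as a complete replacement for the citation.
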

\begin{proof}
This is essentially Lemma~2.4 of Graham--Kolesnik~\cite{GrahamKolesnik}
or Lemma~3 of Srinivasan~\cite{Srinivasan}.
\end{proof}

\section{The Piatetski-Shapiro sequences in arithmetic progressions}
\label{sec:PS_AP}
In this section, we prove \cref{thm:PS_AP} and \cref{cor:PS_AP}.
\begin{proof}[Proof of \cref{thm:PS_AP}.]
We may assume that $x$ is larger than some constant depending only on $c$ and $k$.
Since $N_{c}(x;a,q)$ is invariant under the shift of $a$ by a multiple of $q$,
we may assume $0\le a<q$ without loss of generality.
Also, since $c\ge1$ and $1\le q\le x^{c}$, we have
\[
N_{c}(x;a,q)
\le
\sum_{\substack{
n\le x^{c}\\
n\equiv a\ \mod{q}
}}
1
\ll
x^{c}q^{-1}+1
\ll
x^{c}q^{-1},
\]
which implies the assertion with $k=1$. Thus, we may assume $k\ge2$.
Also, since
\[
x^{1-\frac{k-c}{2^{k}-1}}q^{-\frac{1}{2^{k}-1}}
\ge
x^{c}q^{-1}
\iff
q
\ge
x^{c-\frac{2^{k}-k-1}{2^{k}-2}}
\]
and
\[
\frac{2^{k}-k-1}{2^{k}-2}\ge\frac{1}{2}
\iff
2^{k}\ge2k
\iff
k\ge2,
\]
we have
\[
q\ge x^{c-\frac{1}{2}}
\implies
x^{1-\frac{k-c}{2^{k}-1}}q^{-\frac{1}{2^{k}-1}}
\ge
x^{c}q^{-1},
\]
i.e.\ when $q\ge x^{c-\frac{1}{2}}$, the case $k\ge2$
follows from the case $k=1$.
We may thus assume $q\le x^{c-\frac{1}{2}}$.

We first decompose $N_{c}(x;a,q)$ into dyadic segments as
\begin{equation}
\label{thm:PS_AP:decomp}
N_{c}(x;a,q)
=
\sum_{k=0}^{K}
N_{c}(M_{k},M_{k+1};a,q)
+
O(1),
\end{equation}
where $K\in\mathbb{N}$ is determined by
\begin{equation}
\label{thm:PS_AP:K_def}
2^{K}\le x<2^{K+1},
\end{equation}
the sum $N_{c}(M,M';a,q)$ is defined by
\[
N_{c}(M,M';a,q)
\coloneqq
\sum_{\substack{
M<n\le M'\\
[n^{c}]\equiv a\ \mod{q}
}}
1
\quad\text{for}\quad
1\le M\le M'\le 2M
\]
and $M_{k}$ is defined by
\[
M_{k}\coloneqq\min(2^{k},x).
\]
We shall thus bound $N_{c}(M,M';a,q)$.
Since $0\le a<q$, we have
\[
[n^{c}]\equiv a\ \mod{q}
\iff
\biggl\{\frac{n^{c}}{q}\biggr\}
\in
\biggl[\frac{a}{q},\frac{a+1}{q}\biggr).
\]
Thus, by \cref{lem:ErdosTuran}, we have
\begin{equation}
\label{thm:PS_AP:after_ET}
\biggl|
N_{c}(M,M';a,q)
-
\frac{M'-M}{q}
\biggr|
\ll
\frac{M}{H}
+
\sum_{1\le h\le H}
\biggl(
\frac{1}{H}
+
\min\biggl(\frac{1}{q},\frac{1}{h}\biggr)
\biggr)
\biggl|
\sum_{M<n\le M'}
e\biggl(\frac{hn^{c}}{q}\biggr)
\biggr|.
\end{equation}
for a real number $0<H\le qM$ chosen later.
Note that the length of the sequence $(n^{c})_{M<n\le M'}$ is not just $M'-M$ but $M'-M+O(1)$.
However, we can cover this error since the restriction $0<H\le qM$ implies $MH^{-1}\ge q^{-1}$.
We then estimate the inner sum
\[
\sum_{M<n\le M'}
e\biggl(\frac{hn^{c}}{q}\biggr)
\]
by using \cref{lem:k_dev}.
Since $c\not\in\mathbb{Z}$, the phase function
\[
f(x)
\coloneqq
\frac{hx^{c}}{q}
\]
satisfies
\[
|f^{(r)}(x)|
=
|(c)_{r}hq^{-1}x^{c-r}|
\asymp
FM^{-r}
\quad\text{for $x\in[M,2M]$}
\]
with
\[
F
\coloneqq
hq^{-1}M^{c}
\and
(c)_{r}=\prod_{i=0}^{r-1}(c-i),
\]
where the implicit constant depends on $c$ and $r$.
Thus, since we are assuming $k\ge2$, \cref{lem:k_dev} gives
\[
\sum_{M<n\le M'}
e\biggl(\frac{hn^{c}}{q}\biggr)
\ll
F^{\frac{1}{2^{k}-2}}M^{1-\frac{k}{2^{k}-2}}
+
F^{-1}M
=
(hq^{-1}M^{c})^{\frac{1}{2^{k}-2}}M^{1-\frac{k}{2^{k}-2}}
+
h^{-1}qM^{1-c},
\]
where the implicit constant depends on $c$ and $k$.
On inserting this bound into \cref{thm:PS_AP:after_ET}
with noting that
\begin{align}
\sum_{1\le h\le H}
\biggl(
\frac{1}{H}
+
\min\biggl(\frac{1}{q},\frac{1}{h}\biggr)
\biggr)
\frac{1}{h}
&\ll
H^{-1}\log(H+2)
+
q^{-1}\log(q+2)\\
&\ll
H^{-1}\log(qM+2)
+
q^{-1}\log(q+2),
\end{align}
where we used $0<H\le qM$, we get
\begin{align}
&\biggl|
N_{c}(M,M';a,q)
-
\frac{M'-M}{q}
\biggr|\\
&\ll
MH^{-1}
+
(Hq^{-1})^{\frac{1}{2^{k}-2}}M^{1+\frac{c-k}{2^{k}-2}}
+
M^{1-c}qH^{-1}\log(qM+2)
+
M^{1-c}\log(q+2).
\end{align}
We then optimize $H$ by using \cref{lem:optimize}.
By equating $MH^{-1}$ and $(Hq^{-1})^{\frac{1}{2^{k}-2}}M^{1+\frac{c-k}{2^{k}-2}}$,
we get
\[
(MH^{-1})^{\frac{1}{2^{k}-1}}
((Hq^{-1})^{\frac{1}{2^{k}-2}}M^{1+\frac{c-k}{2^{k}-2}})^{\frac{2^{k}-2}{2^{k}-1}}
=
M^{1-\frac{k-c}{2^{k}-1}}
q^{-\frac{1}{2^{k}-1}}.
\]
Also, by equating $M^{1-c}qH^{-1}\log(qM+2)$
and $(Hq^{-1})^{\frac{1}{2^{k}-2}}M^{1+\frac{c-k}{2^{k}-2}}$,
we get
\begin{align}
(M^{1-c}qH^{-1}\log(qM+2))^{\frac{1}{2^{k}-1}}
((Hq^{-1})^{\frac{1}{2^{k}-2}}M^{1+\frac{c-k}{2^{k}-2}})^{\frac{2^{k}-2}{2^{k}-1}}
\le
M^{1-\frac{k}{2^{k}-1}}
\log(qM+2)^{\frac{1}{2^{k}-1}}.
\end{align}
We further have
\begin{gather}
MH^{-1}\vert_{H=qM}
=q^{-1}
\ll 1
\ll
M^{1-\frac{k}{2^{k}-1}}
\log(qM+2)^{\frac{1}{2^{k}-1}},\\
M^{1-c}qH^{-1}\log(qM+2)\vert_{H=qM}
\ll
M^{1-c}\cdot M^{-1}(\log(q+2)+\log(M+2))
\ll
M^{1-c}\log(q+2).
\end{gather}
Thus, \cref{lem:optimize} shows that
\[
\biggl|
N_{c}(M,M';a,q)
-
\frac{M'-M}{q}
\biggr|
\ll
M^{1-\frac{k-c}{2^{k}-1}}
q^{-\frac{1}{2^{k}-1}}
+
M^{1-\frac{k}{2^{k}-1}}
\log(qM+2)^{\frac{1}{2^{k}-1}}
+
M^{1-c}\log(q+2)
\]
with some appropriate choice of $H\in(0,qM]$.
On inserting this estimate into \cref{thm:PS_AP:decomp}
and noting that
\[
1-\frac{k}{2^{k}-1}
\ge
\frac{1}{k+1}
\]
for $k\ge2$, we get
\[
N_{c}(x;a,q)
=
\frac{x}{q}
+
O(
x^{1-\frac{k-c}{2^{k}-1}}
q^{-\frac{1}{2^{k}-1}}
+
x^{1-\frac{k}{2^{k}-1}}
\log(qx+2)^{\frac{1}{2^{k}-1}}
+
\log(q+2)
).
\]
By recalling $q\le x^{c-\frac{1}{2}}$ and $k\ge2$, we have
\begin{gather}
x^{\frac{c}{2^{k}-1}}
q^{-\frac{1}{2^{k}-1}}
\ge
x^{\frac{1}{2(2^{k}-1)}}
\ge
\log(qx+2)^{\frac{1}{2^{k}-1}}
\and
\log(q+2)
\ll
x^{1-\frac{k}{2^{k}-1}}
\le
x^{1-\frac{k-c}{2^{k}-1}}
q^{-\frac{1}{2^{k}-1}}
\end{gather}
and so we arrive at the assertion.
\end{proof}

\begin{proof}[Proof of \cref{cor:PS_AP}]
When $c\not\in\mathbb{Z}$, the assertion is just a special case of \cref{thm:PS_AP}.
It thus suffices to consider the case $c\in\mathbb{N}$.
Since $c\in\mathbb{N}$ and $d$ is square-free, we have
\[
d\mid[n^{c}]
\iff
d\mid n^{c}
\iff
d\mid n.
\]
This implies
\[
N_{c}(x;0,d)
=
\sum_{\substack{
n\le x\\
d\mid n
}}
1
=
\frac{x}{d}+O(1).
\]
Since $1\le d\le x^{c}$ and $k\ge1$ implies
\[
x^{1-\frac{k-c}{2^{k}-1}}d^{-\frac{1}{2^{k}-1}}
\ge
x^{1-\frac{k}{2^{k}-1}}
\ge
1
\]
and so the assertion follows even if $c\in\mathbb{Z}$.
\end{proof}

\section{Proof of the main theorem}
\label{sec:prf_main_thm}
We now prove \cref{thm:main_thm} and \cref{thm:main_thm_multi_dim}.
\begin{proof}[Proof of \cref{thm:main_thm_multi_dim}]
Let us write
\[
S
\coloneqq
\sum_{\substack{
m_{1},\ldots,m_{r}\le x\\
([m_{1}^{c_{1}}],\ldots,[m_{r}^{c_{r}}])=1
}}
1.
\]
We first prove \cref{thm:main_thm_multi_dim:general_k}.
By using the well-known formula
\[
\sum_{d\mid n}\mu(d)
=
\mathbbm{1}_{n=1}
\]
and taking a parameter $D\in[1,x^{c_{1}}]$ chosen later, we have
\begin{equation}
\label{thm:main_thm_multi_dim:S_S1_S2}
S
=
\sum_{m_{1},\ldots,m_{r}\le x}
\sum_{d\mid ([m_{1}^{c_{1}}],\ldots,[m_{r}^{c_{r}}])}
\mu(d)
=
\sum_{d\le x^{c_{1}}}
\mu(d)
\prod_{i=1}^{r}N_{c_{i}}(x;0,d)
=
\sum_{d\le D}
+
\sum_{D<d\le x^{c_{1}}}
\eqqcolon
S_{1}+S_{2}.
\end{equation}
We then evaluate $S_{1}$ and $S_{2}$ separately.

For the sum $S_{1}$, we just use \cref{cor:PS_AP} in the form
\begin{equation}
\label{thm:main_thm_multi_dim:Nc0d}
N_{c_{i}}(x;0,d)
=
\frac{x}{d}
+
O(x^{1-\frac{k-c_{r}}{2^{k}-1}}d^{-\frac{1}{2^{k}-1}})
\quad\text{for $i=1,\ldots,r$ and squarefree $d\le x^{c_{1}}$},
\end{equation}
where we used $c_{1}\le\cdots\le c_{r}$, this gives
\begin{align}
\prod_{i=1}^{r}
N_{c_{i}}(x;0,d)
&=
\biggl(\frac{x}{d}\biggr)^{r}
+
O\biggl(
\max_{0\le i\le r-1}
(xd^{-1})^{i}
x^{(r-i)(1-\frac{k-c_{r}}{2^{k}-1}}d^{-\frac{1}{2^{k}-1})}
\biggr)\\
&=
\biggl(\frac{x}{d}\biggr)^{r}
+
O\biggl(
x^{r-\frac{k-c_{r}}{2^{k}-1}}d^{-(r-1)-\frac{1}{2^{k}-1}}
+
x^{r-\frac{r(k-c_{r})}{2^{k}-1}}d^{-\frac{r}{2^{k}-1}}
\biggr).
\end{align}
By recalling $r\ge2$, we thus get
\begin{equation}
\label{thm:main_thm_multi_dim:S1}
\begin{aligned}
S_{1}
&=
x^{r}
\sum_{d\le D}
\frac{\mu(d)}{d^{r}}
+
O\biggl(
x^{r-\frac{k-c_{r}}{2^{k}-1}}
\sum_{d\le D}
d^{-(r-1)-\frac{1}{2^{k}-1}}
+
x^{r-\frac{r(k-c_{r})}{2^{k}-1}}
\sum_{d\le D}
d^{-\frac{r}{2^{k}-1}}
\biggr)\\
&=
\frac{1}{\zeta(r)}x^{r}
+
O\bigl(
x^{r-\frac{k-c_{r}}{2^{k}-1}}
+
x^{r-\frac{r(k-c_{r})}{2^{k}-1}}D^{1-\frac{r}{2^{k}-1}}
+
x^{r}D^{-(r-1)}
\bigr).
\end{aligned}
\end{equation}
This completes the calculation of $S_{1}$.

We next estimate $S_{2}$. We restrict $D$ to the range
\[
D\in[x^{\frac{k-c_{r}}{2^{k}-2}},x^{c_{1}}]\subset[1,x^{c_{1}}],
\]
where the interval is non-empty since
\[
\frac{k-c_{r}}{2^{k}-2}
\le
\frac{k}{2^{k}-2}
\le
1
\]
by $k\ge2$. Then, we have
\[
d\ge D\ge x^{\frac{k-c_{r}}{2^{k}-2}}
\implies
\frac{x}{d}
\le
x^{1-\frac{k-c_{r}}{2^{k}-1}}d^{-\frac{1}{2^{k}-1}},
\]
\cref{thm:main_thm_multi_dim:Nc0d} implies
\[
N_{c_{i}}(x;0,d)
\ll
x^{1-\frac{k-c_{r}}{2^{k}-1}}d^{-\frac{1}{2^{k}-1}}
\quad\text{if $d\ge D$}.
\]
We now use this bound to only $N_{c_{1}}(x;0,d)$ with $i=1,\ldots,r-1$.
This gives
\begin{align}
S_{2}
\ll
\sum_{D<d\le x^{c_{1}}}
\prod_{i=1}^{r}
N_{c_{i}}(x;0,d)
&\ll
x^{r-1-\frac{(r-1)(k-c_{r})}{2^{k}-1}}D^{-\frac{r-1}{2^{k}-1}}
\sum_{D<d\le x^{c_{1}}}
\sum_{\substack{
n\le x\\
d\mid [n^{c_{r}}]
}}
1\\
&\ll
x^{r-1-\frac{(r-1)(k-c_{r})}{2^{k}-1}}D^{-\frac{r-1}{2^{k}-1}}
\sum_{n\le x}
\tau([n^{c_{r}}]).
\end{align}
By using the well-known bound $\tau(n)\ll n^{\epsilon}$,
we get
\begin{equation}
\label{thm:main_thm_multi_dim:S2}
S_{2}
\ll
x^{r-\frac{(r-1)(k-c)}{2^{k}-1}+\epsilon}D^{-\frac{r-1}{2^{k}-1}},
\end{equation}
where the implicit constants depend on $\epsilon$.

On inserting \cref{thm:main_thm_multi_dim:S1} and \cref{thm:main_thm_multi_dim:S2}
into \cref{thm:main_thm_multi_dim:S_S1_S2}, we get
\[
S
=
\frac{1}{\zeta(r)}x^{r}
+
O\bigl(
x^{r-\frac{k-c_{r}}{2^{k}-1}}
+
x^{r-\frac{r(k-c_{r})}{2^{k}-1}}D^{1-\frac{r}{2^{k}-1}}
+
x^{r}D^{-(r-1)}
+
x^{r-\frac{(r-1)(k-c_{r})}{2^{k}-1}+\epsilon}D^{-\frac{r-1}{2^{k}-1}}
\bigr).
\]
By noting $1\le c_{1}\le\cdots\le c_{r}<k$ and $k,r\ge2$, we can take
\[
D
\coloneqq
x^{\frac{k-c_{r}}{2^{k}-2}}\in[x^{\frac{k-c_{r}}{2^{k}-2}},x^{c_{1}}]
\and
\epsilon
\coloneqq
\frac{k-c_{r}}{(2^{k}-2)(2^{k}-1)}
\]
to make
\begin{gather}
x^{r-\frac{r(k-c_{r})}{2^{k}-1}}D^{1-\frac{r}{2^{k}-1}}
\le
x^{r-\frac{r(k-c_{r})}{2^{k}-1}}D^{\frac{2^{k}-2}{2^{k}-1}}
=
x^{r-\frac{(r-1)(k-c_{r})}{2^{k}-1}}
\le
x^{r-\frac{k-c_{r}}{2^{k}-1}},\\
x^{r}D^{-(r-1)}
\le
x^{r}D^{-\frac{2^{k}-2}{2^{k}-1}}
=
x^{r-\frac{k-c_{r}}{2^{k}-1}},\\
x^{r-\frac{(r-1)(k-c_{r})}{2^{k}-1}+\epsilon}D^{-\frac{r-1}{2^{k}-1}}
=
x^{r-\frac{(r-1)(k-c_{r})}{2^{k}-1}-\frac{(r-2)k-c_{r}}{(2^{k}-2)(2^{k}-1)}}
\le
x^{r-\frac{k-c_{r}}{2^{k}-1}}.
\end{gather}
Thus, we obtain the first assertion \cref{thm:main_thm_multi_dim:general_k}.

We next deduce \cref{thm:main_thm_multi_dim:special_k} from \cref{thm:main_thm_multi_dim:general_k}.
Note that
\begin{align}
\frac{k-c_{r}}{2^{k}-1}
<
\frac{k+1-c_{r}}{2^{k+1}-1}
&\iff
(k-c_{r})(2^{k+1}-1)
<
(k+1-c_{r})(2^{k}-1)\\
&\iff
k-1+2^{-k}
<
c_{r}.
\end{align}
We thus take $k\in\mathbb{Z}_{\ge2}$ by
\begin{equation}
\label{thm:main_thm_multi_dim:special_k:choce_k}
k-2+2^{-(k-1)}
<
c_{r}
\le
k-1+2^{-k}
\end{equation}
which exists since $k-1+2^{-k}$ is increasing in $k$ and definitely satisfies $c_{r}<k$.
Then,
\[
r-\frac{k-c_{r}}{2^{k}-1}
\le
r-\frac{1-2^{-k}}{2^{k}-1}
=
r-2^{-k}
\le
r-2^{-(\lceil c_{r}\rceil+1)}
\]
since
\[
k-2+2^{-(k-1)}
<
c_{r}
\implies
k-2
<
c_{r}
\implies
k-2
\le
\lceil c_{r}\rceil-1.
\]
This proves \cref{thm:main_thm_multi_dim:special_k}.
\end{proof}

\begin{proof}[Proof of \cref{thm:main_thm}]
\cref{thm:main_thm} follows as a special case $r=2$ and $c_{1}=c_{2}=c$ of \cref{thm:main_thm_multi_dim}.
\end{proof}

\section*{Acknowledgement}
The author would like to thank Koichi Kawada, Kota Saito, Wataru Takeda and Yuuya Yoshida
for their helpful comments and encouragement.
This work was supported by JSPS KAKENHI Grant Numbers JP19K23402, JP21K13772.

\ifendnotes
\newpage
\begingroup
\parindent 0pt
\parskip 2ex
\def\enotesize{\normalsize}
\theendnotes
\endgroup
\fi

\bibliographystyle{amsplain}
\bibliography{PS_Coprime}
\bigskip

\begin{flushleft}
{\textsc{%
\small
Yuta Suzuki\\[.3em]
\footnotesize
Department of Mathematics, Rikkyo University,\\
3-34-1 Nishi-Ikebukuro, Toshima-ku, Tokyo 171-8501, Japan.
}

\small
\textit{Email address}: \texttt{suzuyu@rikkyo.ac.jp}
}
\end{flushleft}

\end{document}